\newtheorem{theorem}{Theorem}[section]
\newtheorem{conjecture}{Conjecture}
\newtheorem{remark}[theorem]{Remark}
\newtheorem{question}[theorem]{Question}
\newtheorem*{theorem*}{Theorem}
\newtheorem*{problem*}{Problem}
\newtheorem*{conjecture*}{Conjecture}
\newtheorem*{question*}{Question}
\newtheorem{lemma}[theorem]{Lemma}
\newtheorem{proposition}[theorem]{Proposition}
\begin{document}
\title[Distribution and Non-vanishing of special values of $L$-series]{Distribution and Non-vanishing of special values of $L$-series attached to Erd\H{o}s functions}

\author[Siddhi Pathak]{Siddhi Pathak}

\address{Department of Mathematics and Statistics, Queen's University, Kingston, Canada, ON K7L 3N6.}
\email{siddhi@mast.queensu.ca}

\subjclass[2010]{11M99}

\keywords{Distribution of values of $L$-series, Moments of values of $L$-series, Non-vanishing of values of $L$-series, Erd\H{o}s's conjecture}

\begin{abstract}
In a written correspondence with A. Livingston, Erd\H{o}s conjectured that for any arithmetical function $f$, periodic with period $q$, taking values in $\{-1,1\}$ when $q \nmid n$ and $f(n)=0$ when $q \mid n$, the series $\sum_{n=1}^{\infty} f(n)/n$ does not vanish. This conjecture is still open in the case $q \equiv 1 \bmod 4$ or when $2 \phi(q)+ 1 \leq q$. In this paper, we obtain the characteristic function of the limiting distribution of $L(k,f)$ for any positive integer $k$ and Erd\H{o}s function $f$ with the same parity as $k$. Moreover, we show that the Erd\H{o}s conjecture is true with ``probability" one.
\end{abstract}

\maketitle


\section{Introduction}\label{intro}

Inspired by Dirichlet's theorem that $L(1,\chi) \neq 0$ for a non-principal Dirichlet character $\chi$, Sarvadaman Chowla \cite{chowla} initiated the study of non-vanishing of the series $$\sum_{n=1}^{\infty} \frac{f(n)}{n}$$for any periodic arithmetical function $f$ whenever the above series converges. Since then, this question has been extensively studied by many authors in a variety of settings (see, for example \cite{tijdeman}, \cite{ram-saradha-1}, \cite{ram-kumar}, \cite{ram-tapas-me} etc.). Most of the study is concentrated around the case when $f$ is supported on the coprime residue classes modulo $q$. However, very little is known otherwise. \\

Possibly the simplest example of an investigation in this scenario is the following conjecture made by Erd\H{o}s in a written correspondence with A. Livingston \cite{livingston}. 
\begin{conjecture}\label{erdos-conj}
Let $q$ be a positive integer. Let $f$ be an arithmetical function, periodic with period $q$ such that 
\begin{equation*}
    f(n) = 
    \begin{cases}
    \pm 1 & \text{ if } q \nmid n, \\
    0 & \text{ if } q \mid n.
    \end{cases}
\end{equation*}
Then the series $\sum_{n=1}^{\infty} f(n)/n \neq 0$, whenever it converges.
\end{conjecture}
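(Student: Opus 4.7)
The plan is to attack the conjecture via transcendence theory rather than a density argument, so that the conclusion applies to every admissible $f$ and not merely almost all. The first step is to put $L(1,f) = \sum_{n=1}^\infty f(n)/n$ in closed form. Convergence of the series forces $\sum_{a=1}^q f(a) = 0$, and a standard manipulation with the digamma function yields
\[
L(1,f) \;=\; -\frac{1}{q}\sum_{a=1}^{q-1} f(a)\,\psi(a/q).
\]
Substituting Gauss's digamma formula expresses $L(1,f)$ as a $\mathbb{Q}$-linear combination, with coefficients in $\tfrac{1}{q}\{-1,0,1\}$, of the cotangent values $\pi\cot(\pi a/q)$ and the cyclotomic logarithms $\log|1-\zeta_q^k|$, where $\zeta_q=e^{2\pi i/q}$. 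Splitting $f = f^{+}+f^{-}$ into its even and odd parts under $n\mapsto q-n$ separates these two kinds of terms, so the conjecture reduces to two independent non-vanishing questions, one for each.

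The second step is to invoke Baker's theorem on linear forms in logarithms of algebraic numbers, together with the transcendence of $\pi$ over the field generated by such logarithms, to reduce each non-vanishing problem to a purely algebraic one: do there exist $\pm 1$-valued coefficient vectors producing a $\mathbb{Z}$-linear relation of the prescribed shape among $\{\cot(\pi a/q)\}$ or among $\{\log|1-\zeta_q^k|\}$? By Okada's linear independence criterion, all such rational relations are generated, respectively, by the involution $a \mapsto q-a$ and by the Galois action on cyclotomic units. The third step is a case analysis on $q$: when $q$ is prime, or more generally when $2\phi(q)+1 > q$, the space of Okada relations is too small to contain a non-trivial $\pm 1$-combination, and the conjecture follows by a parity/norm argument on Galois orbits, recovering the partial results already in the literature.

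The main obstacle is precisely the regime $q\equiv 1\pmod 4$ with $2\phi(q)+1 \le q$ singled out in the abstract. In that range the involution $n \mapsto q-n$ has many admissible orbits, the space of Okada relations is in principle large enough to accommodate genuine $\pm 1$-combinations, and transcendence theory alone gives no further information. Breaking this barrier would require a new input sensitive to the sign pattern itself, such as a sharpened Baker-type estimate for linear forms with $\pm 1$ coefficients, non-vanishing results for specific twisted $L$-values, or a $p$-adic regulator calculation à la Gross--Koblitz. In the absence of such an input this method stalls exactly where the conjecture remains open, which is presumably why the paper's main theorem instead quantifies how exceptional such failures would have to be.
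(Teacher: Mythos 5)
The statement you were asked about is Conjecture \ref{erdos-conj}, which the paper does not prove and which remains open; your proposal, by your own admission in its final paragraph, also does not prove it. What you have written is an accurate reconstruction of the known partial results: the reduction of $L(1,f)$ via Gauss's digamma formula to a $\mathbb{Q}$-linear combination of $\pi\cot(\pi a/q)$ and logarithms of cyclotomic numbers, the splitting into odd and even parts, the use of Baker's theorem to reduce non-vanishing to a purely algebraic question about $\pm 1$-relations, and the resolution of that algebraic question for $q$ prime (Baker--Birch--Wirsing), for $2\phi(q)+1>q$ (Okada), and for $q\equiv 3 \bmod 4$ (Murty--Saradha). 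But the gap you identify in your last paragraph --- the regime $q\equiv 1 \bmod 4$ with $2\phi(q)+1\le q$, where the space of admissible relations is large enough that transcendence theory alone cannot exclude a $\pm1$-combination --- is precisely why the statement is still a conjecture. A proof attempt that concludes ``this method stalls exactly where the conjecture remains open'' is a correct diagnosis of the literature, not a proof.

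For comparison, the paper does not attempt the full conjecture either. Its contribution in this direction is Theorem \ref{density-1}: the set of Erd\H{o}s functions $f$ with $L(1,f)=0$ has density zero among all Erd\H{o}s functions, averaged over odd moduli. The key input is different from yours: instead of Baker's theorem, it uses the $\mathbb{Q}$-linear independence of the Euler--Lehmer constants $\Psi(a/q)+\gamma$ for $(a,q)=1$ (from Murty--Saradha) to show that within each equivalence class of functions agreeing on the non-coprime residues, at most one can satisfy $L(1,f)=0$ (Proposition \ref{L(1,f)-prop}); this bounds $\#V_q$ by $2^{q-1-\phi(q)}$, which is then compared to $\#E_q=\binom{q-1}{(q-1)/2}$ by Stirling-type estimates. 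That argument buys a quantitative ``probability one'' statement for every odd $q$ simultaneously, at the cost of saying nothing about any individual $f$ in the hard range; your transcendence route says everything about individual $f$ in the easy ranges and nothing at all in the hard range. If you want to salvage a theorem from your write-up, you should restate it as a proof of the known cases ($q$ prime, $2\phi(q)+1>q$, or $q\equiv 3\bmod 4$) with precise citations for Okada's criterion, rather than presenting it as an attack on the full conjecture.
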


For any periodic function $f$, one can define the $L$-series
\begin{equation*}
    L(s,f) := \sum_{n=1}^{\infty} \frac{f(n)}{n^s},
\end{equation*}
which converges absolutely for $\Re(s)>1$. Using the theory of the Hurwitz zeta-function, this series can be analytically continued to the entire complex plane except for a simple pole at $s=1$ with residue $\frac{1}{q} \sum_{a=1}^q f(a)$. Thus, we see that the series $L(1,f) = \sum_{n=1}^{\infty} f(n)/n $ converges if and only if $\sum_{a=1}^q f(a)=0$. Hence, for the sake of brevity, we say that a rational valued function $f$ on the integers, periodic with period $q$ is an Erd\H{o}s function mod $q$ if $f(n) \in \{-1,1\}$ when $q \nmid n$ and $f(n) = 0$ otherwise and $\sum_{a=1}^q f(a) = 0$.\\

Erd\H{o}s functions may be viewed as non-multiplicative analogues of quadratic Dirichlet characters. For a fundamental discriminant $D$, let $\chi_D$ be the quadratic character modulo $|D|$ given by the Kronecker symbol, i.e, 
\begin{equation*}
    \chi_D (n) := \bigg(\frac{D}{n} \bigg).
\end{equation*}
In 1951, S. Chowla and P. Erd\H{o}s \cite{chowla-erdos} proved that the limit as $N$ tends to infinity of the frequencies
\begin{equation*}
    \frac{\# \bigg\{ D \, : \, |D| \leq N, \, L(1,\chi_D) \leq x \bigg\}}{N}
\end{equation*}
exists for all real $x$ and is a continuous distribution function. In the 1960s, Barban \cite{barban1, barban2} calculated moments of $L(1,\chi_D)$ for all integer orders $k > 0$ and thus, showed that the characteristic function of the corresponding distribution has the form
\begin{equation*}
    \sum_{k=0}^{\infty} \frac{r(k)}{k!} \, {(it)}^k.
\end{equation*}
Here $$r(k) = \sum_{\substack{n=1, \\ n \text{ odd}}}^{\infty} \frac{\phi(n) \, \tau_k(n^2)}{n^3},$$ where $\tau_k(n)$ is the $k^{\text{th}}$ divisor function, i.e., number of ways of writing $n$ as a product of $k$ natural numbers. This was also obtained by P. D. T. A. Elliott \cite[Theorem 22.1]{elliot} exploiting the multiplicative nature of quadratic characters. More specifically, Elliott proved that
\begin{theorem}
There is a distribution function $F(z)$ so that
\begin{equation*}
    \nu_x \bigg( D; h(-D) \leq \frac{e^z}{\pi} \sqrt{D} \bigg) = F(z) + O \bigg( \sqrt{\frac{\log \log x}{\log x}} \bigg)
\end{equation*}
holds uniformly for all real $z$ and real $x \geq 9$. $F(z)$ has a probability density, may be differentiated any number of times and has the characteristic function
\begin{equation*}
    \prod_{p} \left( \bigg( \frac{1}{p} + \frac{1}{2} {\bigg(1 - \frac{1}{p} \bigg) \bigg)}^{1-it} \, + \, \frac{1}{2} \bigg( 1 - \frac{1}{p}\bigg) {\bigg(1 + \frac{1}{p} \bigg)}^{-it}\right)
\end{equation*}
which belongs to the Lebesgue class $L(-\infty, \infty)$.
\end{theorem}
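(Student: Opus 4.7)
The plan is to reduce the class-number distribution problem to a distributional problem for $\log L(1,\chi_{-D})$ and then extract the Euler product for the characteristic function by a moment-style computation, using the near-independence of the Kronecker symbols $\chi_{-D}(p)$ as $D$ varies. The Dirichlet class number formula $h(-D) = (w_D/2\pi)\sqrt{D}\,L(1,\chi_{-D})$ (with $w_D=2$ for $D>4$) converts the event $h(-D)\le e^z\sqrt{D}/\pi$ into $\log L(1,\chi_{-D})\le z$ apart from a finite exceptional set. It therefore suffices to obtain the limiting distribution of $\log L(1,\chi_{-D})$ as $D$ ranges over fundamental discriminants with $|D|\le x$, together with a uniform error of the stated size.

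Next I would expand, via the Euler product,
\[
\log L(1,\chi_{-D}) \;=\; \sum_{p}\sum_{k\ge 1}\frac{\chi_{-D}(p)^{k}}{k\,p^{k}},
\]
truncate the prime sum at a cutoff $y=y(x)$ tending slowly to infinity, and discard the contribution of $k\ge 2$, which is $O(1)$ uniformly and well controlled. The tail $\sum_{p>y}\chi_{-D}(p)/p$ is handled by a second moment computation: opening the square produces diagonal terms contributing $\sum_{p>y}1/p^{2}\cdot\#\{D\}$ and off-diagonal terms $\chi_{-D}(pq)$ whose sum over fundamental discriminants $|D|\le x$ is small by a P\'olya--Vinogradov estimate applied to $D\mapsto \bigl(\tfrac{\cdot}{D}\bigr)$ after inversion via quadratic reciprocity. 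This reduces the problem to the joint distribution of the finite family $\{\chi_{-D}(p)\}_{p\le y}$.

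The core step is to show that, as $D$ varies over fundamental discriminants with $|D|\le x$, the tuple $\bigl(\chi_{-D}(p)\bigr)_{p\le y}$ converges in joint distribution to independent random variables with
\[
\mathbb{P}[\chi=0]=\tfrac{1}{p},\qquad \mathbb{P}[\chi=\pm 1]=\tfrac{1}{2}\bigl(1-\tfrac{1}{p}\bigr).
\]
By quadratic reciprocity, for any finite set $S$ of primes and any choice of signs $\varepsilon_{p}\in\{-1,0,1\}$, the condition $\chi_{-D}(p)=\varepsilon_{p}$ for all $p\in S$ becomes a union of congruence conditions on $D$ modulo $4\prod_{p\in S}p$; a Chinese-remainder factorisation together with an elementary count of fundamental discriminants in residue classes yields the claimed product density, with a quantitative remainder uniform in $|S|\le \pi(y)$. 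Taking the expectation of $\prod_{p\le y}(1-\chi_{-D}(p)/p)^{-it}$ against this joint law produces, prime by prime, the Euler factor
\[
\frac{1}{p} \,+\, \tfrac{1}{2}\Bigl(1-\tfrac{1}{p}\Bigr)^{1-it} \,+\, \tfrac{1}{2}\Bigl(1-\tfrac{1}{p}\Bigr)\Bigl(1+\tfrac{1}{p}\Bigr)^{-it}.
\]
Each factor equals $1+O(t^{2}/p^{2})$ uniformly on compact sets in $t$, so the infinite product converges absolutely and uniformly on compacta to the announced characteristic function.

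Finally, I would combine the truncation error with a Berry--Esseen-type comparison between the distribution of $\sum_{p\le y}\chi_{-D}(p)/p$ under the arithmetic counting and under the independent model, and optimise the cutoff $y$ (the natural choice being a small power of $\log x$) to produce the explicit remainder $O\!\bigl(\sqrt{\log\log x/\log x}\bigr)$. The characteristic function decays faster than any polynomial in $|t|$ (from the $p=2$ and $p=3$ factors alone one obtains integrable decay), which by Fourier inversion gives that $F(z)$ possesses a smooth probability density, differentiable any number of times. The hard part, and the step requiring the most care, is the third one: making the near-independence of $\{\chi_{-D}(p)\}_{p\le y}$ effective uniformly in a set of primes whose size grows with $x$, since each doubling of $y$ doubles the modulus of the controlling arithmetic progression and thereby weakens the main term relative to the remainder. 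Balancing these two effects is what determines, and ultimately forces, the quantitative rate appearing in the theorem.
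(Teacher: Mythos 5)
This theorem is not proved in the paper at all: it is quoted verbatim as Theorem 22.1 of Elliott's book, purely as background for the analogy between quadratic characters and Erd\H{o}s functions, so there is no in-paper argument to measure your attempt against. That said, your sketch follows what is essentially Elliott's own route --- class number formula, Euler-product expansion of $\log L(1,\chi_{-D})$, truncation at a slowly growing $y(x)$ with a second-moment/P\'olya--Vinogradov bound on the tail, and an effective independence statement for the vector $(\chi_{-D}(p))_{p\le y}$ --- and your prime-by-prime expectation correctly produces the Euler factor $\frac1p+\frac12\big(1-\frac1p\big)^{1-it}+\frac12\big(1-\frac1p\big)\big(1+\frac1p\big)^{-it}$, which incidentally shows that the parenthesisation in the displayed characteristic function above is a misprint (the exponent $1-it$ should attach to $\big(1-\frac1p\big)$ alone). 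One internal inconsistency to repair: you let $D$ range over fundamental discriminants yet assign $\mathbb{P}[\chi_{-D}(p)=0]=\frac1p$; among fundamental discriminants the density of $p\mid D$ is $\frac{1}{p+1}$, and the factor $\frac1p$ in the theorem's Euler product corresponds to averaging over a family in which divisibility by $p$ has density $\frac1p$. You must fix one convention and carry it through, or the product you obtain will not match the one claimed.

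The one genuine gap is your justification of the regularity of $F$. You assert that the $p=2$ and $p=3$ factors alone already give integrable decay of the characteristic function. They do not: each factor $\phi_p(t)$ has $|\phi_p(t)|\le 1$ with an almost periodic modulus whose supremum over any unbounded $t$-interval equals $1$ (apply Kronecker's theorem to the frequencies $\log(1-\frac1p)$ and $\log(1+\frac1p)$), so every finite subproduct returns arbitrarily close to modulus $1$ infinitely often and lies in no Lebesgue class $L(-\infty,\infty)$. The integrability, and hence the existence of a density differentiable any number of times, requires the full infinite product: the two oscillating terms in $\phi_p(t)$ have phase difference roughly $2t/p$, so for $p$ somewhat larger than $|t|$ one gets $|\phi_p(t)|\le \exp(-ct^2/p^2)$, and summing $-\log|\phi_p(t)|$ over such $p$ yields $|\phi(t)|\ll\exp(-c|t|/\log|t|)$. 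Some estimate of this kind is precisely what delivers the final sentence of the theorem, and it needs to be supplied rather than waved at.
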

By Dirichlet's class number formula, for $D \geq 4$, 
\begin{equation*}
    L(1,\chi_{-D}) = \frac{\pi}{\sqrt{D}} \, h( -D ).
\end{equation*}
Thus, Elliott's theorem proves that the distribution function of the values $L(1,\chi_{-D})$ is smooth.
Further work in this context was carried out by A. Granville and K. Soundararajan \cite{granville-sound} where in they established the conjectures of Montgomery and Vaughan regarding the distribution of the extreme values of $L(1,\chi_D)$.\\

Pursuing the analogy of Erd\H{o}s functions as non-multiplicative analogs of quadratic Dirichlet characters, one may ask if certain properties of quadratic Dirichlet characters are also satisfied by Erd\H{o}s functions. In this paper, we follow the approach of Barban to understand the distribution of special values of $L$-series attached to Erd\H{o}s functions. For a positive integer $q$, let $E_q$ be the set of Erd\H{o}s functions mod $q$. Note that the convergence condition $\sum_{a=1}^q f(a) = 0$ implies that $E_q$ is non-empty only when $q \geq 3$ is odd. \\

Let $B_m$ denote the $m^{\text{th}}$ Bernoulli number defined by the generating function
\begin{equation*}
    \frac{z}{e^z - 1} = \sum_{m=0}^{\infty} \frac{B_m}{m!} \, z^m.
\end{equation*}
For any positive integer $n$, let $P_n$ denote the partially ordered set of partitions of $n$, i.e., for partitions $\underline{\lambda}$ and $\underline{\eta}$ of $n$, $\underline{\eta} \leq \underline{\lambda}$ if the parts of $\underline{\eta}$ can be obtained by merging the parts of $\underline{\lambda}$. The symbol ${\underline{\lambda} \choose \underline{\eta}}$ counts the number of ways in which parts of $\underline{\lambda}$ can be merged to obtain $\underline{\eta}$. For each $\underline{\lambda} \in P_n$, we inductively define $c(\underline{\lambda})$ as follows.
\begin{equation*}
    c((n)) := 2^{2nk} {((k-1)!)}^{2n} \bigg({(-1)}^{nk+1} \frac{B_{2nk}}{2nk!} \bigg)
\end{equation*}
and
\begin{equation}\label{c(lambda)}
    c(\underline{\lambda})=((\lambda_1,\cdots,\lambda_m)) : = \bigg[ 2^{2nk} {((k-1)!)}^{2n} \bigg(\prod_{i=1}^m {(-1)}^{\lambda_{i}k+1} \frac{B_{2\lambda_{i}k}}{2\lambda_{i}k!} \bigg) \bigg] - \sum_{\underline{\eta} < \underline{\lambda}} {\underline{\lambda} \choose \underline{\eta}} c( \underline{\eta}).
\end{equation}
Then, using the method of moments, we show that
\begin{theorem}\label{distribution}
Fix a positive integer $k \geq 1$. For any integer $r \geq 1$ and real $x$, let 
\begin{equation*}
    E_{2r+1}^{(k)}:= \{ f \in E_{2r+1} \, : \, f \text{ is of the same parity as } k \}
\end{equation*}
and 
\begin{equation*}
    F_r(x) := \frac{\# \big\{f \in E_{2r+1}^{(k)} \, : \, L(k,f) \leq x \big\}}{\# E_{2r+1}^{(k)}}.
\end{equation*}
Then, $F_r(x)$ converges to a distribution $F(x)$ at every point of continuity of the latter. Moreover, the corresponding characteristic function is entire and is given by
\begin{equation*}
    \phi(t) = \sum_{n=0}^{\infty} \frac{M(2n)}{(2n)!} \, {(-t)}^{n},
\end{equation*}
where 
\begin{equation*}
    M(2n) := \frac{\pi^{2nk}}{{((k-1)!)}^{2n}\, 2^{2n}} \, \bigg( \sum_{\underline{\lambda} \in P_n} c( \underline{\lambda})\bigg).
\end{equation*}
\end{theorem}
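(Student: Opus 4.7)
The approach is the method of moments, in the spirit of Barban's treatment of $L(1,\chi_D)$: compute the moments of $L(k,f)$ over $E_{2r+1}^{(k)}$, show they converge to explicit limits $M(2n)$, verify the resulting sequence determines a unique distribution, and read off the characteristic function.

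\emph{Step 1: Explicit formula and parametrisation.} Set $q = 2r+1$. The same-parity hypothesis gives $f(q-a) = (-1)^{k}f(a)$ for $1 \leq a \leq r$, so pairing the residues $a$ and $q-a$ in $L(k,f)$ yields
\[
L(k,f) \;=\; \sum_{a=1}^{r} f(a)\,\xi_{a,k,q}, \qquad \xi_{a,k,q} := \sum_{m=0}^{\infty}\!\left(\frac{1}{(a+mq)^{k}} \;+\; \frac{(-1)^{k}}{(q-a+mq)^{k}}\right),
\]
with $\xi_{a,k,q}\to a^{-k}$ as $q\to\infty$. I would then parametrise $E_{2r+1}^{(k)}$: when $k$ is odd, parity forces $\sum_{a}f(a)=0$ automatically and $(f(1),\dots,f(r))$ ranges freely over $\{\pm 1\}^{r}$; when $k$ is even, the convergence condition reduces to the single linear constraint $\sum_{a=1}^{r}f(a)=0$, cutting out a slice of $\{\pm 1\}^{r}$ (so $r$ must be even).

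\emph{Step 2: Moment expansion.} Averaging $L(k,f)^{n}$ over $f \in E_{2r+1}^{(k)}$ gives
\[
M_{r}(n) \;=\; \sum_{a_{1},\dots,a_{n}=1}^{r}\prod_{i=1}^{n}\xi_{a_{i},k,q}\cdot\mathbb{E}\!\left[\prod_{i=1}^{n} f(a_{i})\right].
\]
In the odd-$k$ case the $f(a)$'s are independent Rademacher signs, so the inner expectation equals $1$ when every value among $a_{1},\dots,a_{n}$ appears an even number of times and $0$ otherwise; in the even-$k$ case the slice constraint introduces correlations, but a local central limit estimate (equivalently, a direct generating-function computation on $\{\pm 1\}^{r}$) shows the discrepancy is $O(r^{-1/2})$, which is absorbed into lower-order terms. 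Grouping multi-indices by their coincidence partition $\pi$ of $\{1,\dots,n\}$ then forces odd moments to vanish in the limit and gives, for $n=2m$,
\[
M_{r}(2m) \;=\; \sum_{\underline{\lambda}\in P_{m}} N(\underline{\lambda})\sum_{\substack{b_{1},\dots,b_{\ell(\underline{\lambda})}\in\{1,\dots,r\}\\ \text{distinct}}}\prod_{i=1}^{\ell(\underline{\lambda})}\xi_{b_{i},k,q}^{2\lambda_{i}}\;+\;o(1),
\]
where $N(\underline{\lambda})$ counts set-partitions of $\{1,\dots,2m\}$ with block-size multiset $2\underline{\lambda}$.

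\emph{Step 3: Passage to the limit and identification.} For each fixed $j$, $\sum_{a=1}^{r}\xi_{a,k,q}^{2j}\to\zeta(2jk)$ as $r\to\infty$, and Euler's formula $\zeta(2jk) = (-1)^{jk+1}(2\pi)^{2jk}B_{2jk}/\bigl(2\,(2jk)!\bigr)$ converts each such factor into Bernoulli numbers. Möbius inversion on the partition lattice $P_{m}$ removes the distinctness constraint at the cost of subtractions indexed by coarsenings $\underline{\eta}<\underline{\lambda}$, with weights precisely the merging coefficients ${\underline{\lambda}\choose\underline{\eta}}$; matching the resulting identity term-by-term with the recursion \eqref{c(lambda)} identifies $\lim_{r\to\infty}M_{r}(2m)$ with $M(2m)$ as in the statement. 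Since $|L(k,f)|\leq\zeta(k)$ uniformly, the limiting moments satisfy $M(2m)\leq\zeta(k)^{2m}$, so they uniquely determine a compactly supported distribution $F$; the Fr\'echet--Shohat theorem then gives $F_{r}(x)\to F(x)$ at every continuity point, and summing $(it)^{2n}M(2n)/(2n)!$ produces the stated entire characteristic function.

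\emph{Main obstacle.} The principal difficulty is the even-$k$ case, where the sum-zero constraint couples the $f(a)$'s so that the identity $\mathbb{E}[\prod f(a_{i})]\in\{0,1\}$ holds only up to error; these errors must be controlled uniformly across the $O(r^{n})$ multi-indices in $M_{r}(n)$, which will require either a sharp local CLT for a simple random walk or an exact combinatorial identity for Rademacher products on the slice $\{x\in\{\pm 1\}^{r}:\sum x_{i}=0\}$. A secondary, purely combinatorial challenge is to match the Möbius inversion on $P_{m}$ to the recursion \eqref{c(lambda)}, verifying that the constants $2^{2nk}((k-1)!)^{2n}$ and the signs $(-1)^{\lambda_{i}k+1}$ emerge correctly at every level of the recursion.
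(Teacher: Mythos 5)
Your overall strategy coincides with the paper's (method of moments, odd moments killed by the symmetry $f\mapsto -f$, grouping of multi-indices by coincidence partitions, then a determinacy criterion for the limit law), but your central computation is genuinely different. The paper evaluates $L(k,f)$ by the closed cotangent formula \eqref{L(k,f)-same} and then computes the finite power sums $\sum_{t=1}^{q-1}\bigl(\tfrac{d^{k-1}}{dz^{k-1}}\cot z\big\vert_{z=\pi t/q}\bigr)^{2u}$ exactly, as polynomials in $q$, via the Bayad--Raouj reciprocity law for higher-dimensional Dedekind sums (Theorem \ref{reciprocity}); the Bernoulli numbers in $c(\underline{\lambda})$ are the leading coefficients of those polynomials. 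You instead keep the Dirichlet series, write $L(k,f)=\sum_{a\le r}f(a)\xi_{a,k,q}$ with $\xi_{a,k,q}\to a^{-k}$, and obtain the limiting moments from $\sum_{a\le r}\xi_{a,k,q}^{2j}\to\zeta(2jk)$ together with Euler's formula; this is softer and avoids the reciprocity law entirely, at the cost of needing the uniform domination $|\xi_{a,k,q}|\ll_k a^{-k}$ (true, since $a+mq\ge 2ma$ and $q-a\ge q/2$ for $a\le r$) and of the purely combinatorial task of matching your partition sum, including the set-partition multiplicities $N(\underline{\lambda})$, against the recursion \eqref{c(lambda)} --- a task you correctly flag rather than carry out. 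Your concern about the even-$k$ slice $\{\sum_{a\le r}f(a)=0\}$ is also well taken: there $\mathbb{E}[f(a)f(b)]=-1/(r-1)$ for $a\neq b$, the correlations are $O(1/r)$, and since $\sum_a|\xi_{a,k,q}|\le\zeta(k)+o(1)$ for even $k\ge2$ they are harmless; the paper instead disposes of the mixed terms by a one-point sign flip $f\mapsto f^-$.

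The one genuine error is in your determinacy step. You assert $|L(k,f)|\le\zeta(k)$ uniformly, hence $M(2m)\le\zeta(k)^{2m}$ and a compactly supported limit. This fails exactly for $k=1$, the case relevant to the Erd\H{o}s conjecture: $\zeta(1)$ diverges, and $L(1,f)$ is unbounded over Erd\H{o}s functions (take $f$ odd with $f(a)=1$ for all $1\le a\le r$; then $L(1,f)=\tfrac{\pi}{q}\sum_{a\le r}\cot(\pi a/q)$ grows like $\log q$). So for $k=1$ the limit law is not compactly supported and Fr\'echet--Shohat cannot be invoked in the form you state. The repair is cheap: a Khintchine-type comparison gives $M(2m)\le \frac{(2m)!}{2^m m!}\bigl(\sum_a\xi_{a,k,q}^2\bigr)^m\to \frac{(2m)!}{2^m m!}\,\zeta(2k)^m$, which is finite for every $k\ge1$ since $2k\ge2$, whence $\sum_m M(2m)\,r^{2m}/(2m)!\le e^{\zeta(2k)r^2/2}$ has infinite radius of convergence; this is precisely the criterion of Theorem \ref{radius-cvg} used in the paper, and it simultaneously yields the entirety of $\phi$. (The paper reaches an equivalent bound through the monotonicity of $|B_{2\lambda_i k}|/(2\lambda_i k)!$ and the Hardy--Ramanujan asymptotic for $p(n)$.) With that substitution, and the combinatorial matching completed, your plan goes through.
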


The restriction on the parity of the functions is inherent in the nature of the value $L(1,f)$ for any periodic function $f$. Indeed, if $f$ is a $q$-periodic rational valued function with $\sum_{a=1}^q f(a) = f(q) = 0$, then Gauss's formula for the digamma function (see \cite{livingston} for details) gives
\begin{equation*}
    L(1,f) = \frac{-\pi}{2q}\sum_{a=1}^{q-1} f(a) \, \cot \left( \frac{a \pi}{q}\right) + \frac{2}{q} \sum_{0 < j \leq q/2} \log \sin \frac{\pi j}{q} \sum_{a=1}^{q-1} f(a) \, \cos \bigg( \frac{2 \pi a j}{q}\bigg).
\end{equation*}
Therefore, when $f$ is odd, the second term in the above expression vanishes and $L(1,f)$ simplifies to an algebraic multiple of $\pi$. On the other hand, if $f$ is not odd, then $L(1,f)$ is a linear form in logarithms of algebraic numbers making it intractable, especially in order to use the method of moments. Another difficulty that arises in studying the nature of these values as opposed to those of Dirichlet $L$-functions is the absence of multiplicativity among the coefficients. Thus, the probabilistic method of Elliott does not apply to this scenario.\\

Although some progress has been made towards Conjecture \ref{erdos-conj}, it remains open in the cases $q \equiv 1 \bmod 4$ or $q > 2 \phi(q) + 1$. Conjecture \ref{erdos-conj} follows from a theorem of Baker, Birch and Wirsing \cite{bbw} when $q$ is prime. It was proved for $q < 2 \phi(q) + 1$ by T. Okada \cite{okada} and for $q \equiv 3 \bmod 4$ by M. Ram Murty and N. Saradha \cite{ram-saradha}. In 2015, T. Chatterjee and M. Ram Murty \cite{ram-tapas} approached Conjecture \ref{erdos-conj} from a density theoretic perspective. They showed that if $$S(x) := \# \big\{ q\equiv 1 \bmod 4, \, q \leq x \, : \, \text{Erd\H{o}s's conjecture holds for } q \big\}, $$then
\begin{equation*}
    \lim_{x \rightarrow \infty} \frac{S(x)}{x/4} \geq 0.82.
\end{equation*}
This can be interpreted as the Erd\H{o}s conjecture being true for at least 82\% of $q \equiv 1 \bmod 4$, which is the best possible lower bound using their methods. In this paper, we use an alternate approach and improve on their result by proving that
\begin{theorem}\label{density-1}
Let $q \geq 3$ be an odd positive integer. Let 
$E_q$ be the set of Erd\H{o}s functions mod $q$ and $V_q := \big\{f \in E_q \, : \, L(1,f) = 0 \big\}$.
Then,
\begin{equation*}\label{L(1,f)-density-0}
    \lim_{x \rightarrow \infty} \bigg\{ \bigg( {\sum_{\substack{3 \leq q \leq x, \\ q \text{ odd }}} \# V_q} \bigg) \bigg/ \bigg( {\sum_{\substack{3 \leq q \leq x,\\ q \text{ odd }}} \# E_q } \bigg) \bigg\} = 0.
\end{equation*}
\end{theorem}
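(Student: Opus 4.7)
The plan is to prove the stronger per-$q$ bound $\#V_q/\#E_q \to 0$ as $q\to\infty$ through odd integers; this implies Theorem \ref{density-1} because $\#E_q = \binom{q-1}{(q-1)/2} \sim 2^{q-1}/\sqrt{\pi(q-1)/2}$ grows exponentially, so the averaged denominator is dominated by its largest term. For $q$ prime (Baker--Birch--Wirsing), for $q \equiv 3 \pmod 4$ (Ram Murty--Saradha), and for $q < 2\phi(q)+1$ (Okada), one already has $V_q = \emptyset$. Only composite $q \equiv 1 \pmod 4$ with $q \geq 2\phi(q)+1$ requires new work.

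The main tool is Theorem \ref{distribution} with $k=1$. Since the characteristic function $\phi(t) = \sum_{n\geq 0} M(2n)(-t)^n/(2n)!$ is entire, a Carleman-type moment-uniqueness argument shows that the limiting distribution $F$ is non-atomic (in fact absolutely continuous), and in particular $F(\{0\}) = 0$. Combined with the weak convergence provided by Theorem \ref{distribution}, this yields
\[
    \frac{\#\{f \in E_q^{(1)} : L(1,f) = 0\}}{\#E_q^{(1)}} \longrightarrow 0,
\]
which handles the odd Erd\H{o}s functions.

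To transfer this bound to all of $E_q$, decompose $f = f_o + f_e$ with $f_o, f_e : \mathbb{Z}/q\mathbb{Z} \to \{-1,0,1\}$ the odd and even parts. Then $L(1,f) = L(1,f_o) + L(1,f_e)$, where by Gauss's digamma formula $L(1,f_o) \in \pi\overline{\mathbb{Q}}$ while $L(1,f_e)$ lies in the $\overline{\mathbb{Q}}$-span of $\{\log\sin(\pi j/q)\}_{j}$. Baker's theorem on linear forms in logarithms separates these transcendence classes, so $L(1,f)=0$ forces $L(1,f_o) = L(1,f_e) = 0$ individually. Conditioning on the symmetric support $S \subseteq \{1,\ldots,q-1\}$ on which $f_o$ is nonzero and applying the preceding bound to the restricted odd function on $S$ (then summing over admissible $S$) handles the first vanishing condition.

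The principal obstacle is controlling $\#\{f \in E_q : L(1, f_e) = 0\}$, since Theorem \ref{distribution} with $k=1$ provides no direct moment information for $L(1,\cdot)$ on even functions. The intended route is to invoke the linear independence over $\mathbb{Q}$ of a suitable subset of $\{\log\sin(\pi j/q)\}$ (obtainable via Okada-type criteria even for composite $q$) to translate $L(1,f_e)=0$ into a system of linear constraints on the $\pm 1$-values of $f_e$, and then to apply an Erd\H{o}s--Kleitman or Hal\'asz-type anti-concentration estimate to bound the number of $\pm 1$-solutions by $o(\#E_q^{(2)})$. Combining the odd and even bounds yields $\#V_q/\#E_q \to 0$, which gives Theorem \ref{density-1}.
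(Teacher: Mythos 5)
Your proposal takes a completely different route from the paper, and it has a genuine gap at its central step. You claim that because the characteristic function $\phi(t)$ of the limiting distribution $F$ is entire, ``a Carleman-type moment-uniqueness argument shows that $F$ is non-atomic (in fact absolutely continuous), and in particular $F(\{0\})=0$.'' This does not follow: moment-determinacy and entirety of the characteristic function are perfectly compatible with $F$ being purely atomic (any compactly supported discrete law has an entire characteristic function), and nothing in Theorem \ref{distribution} rules out an atom at $0$. Indeed, the paper explicitly lists the continuity of $F$ as an \emph{open problem} in its concluding section, so the fact you need is not available. Without $\mu(\{0\})=0$ for the limit law, the portmanteau inequality $\limsup_q \mu_q(\{0\}) \leq \mu(\{0\})$ gives you nothing. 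A second, acknowledged gap is the even part: the $\mathbb{Q}$-linear independence of a ``suitable subset'' of $\{\log\sin(\pi j/q)\}$ for composite $q$ is not known in the generality you need (relations among these numbers are governed by cyclotomic units and are genuinely present), and the anti-concentration step is only sketched. There is also a smaller issue in that Theorem \ref{distribution} concerns the full ensemble $E_q^{(1)}$ as $q\to\infty$, so it cannot be applied to the sub-ensemble of odd functions with a fixed support $S$ as your conditioning step requires.

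For comparison, the paper's proof avoids distributional input entirely. It defines $f\sim g$ iff $f$ and $g$ agree on all residues $a$ with $(a,q)\neq 1$, and shows (Proposition \ref{L(1,f)-prop}) that each equivalence class contains at most one $f$ with $L(1,f)=0$: two such functions would produce a nontrivial $\mathbb{Q}$-linear relation among the numbers $\Psi(a/q)+\gamma$ with $(a,q)=1$, contradicting the linear independence theorem of Ram Murty and Saradha. Hence $\#V_q \leq 2^{\,q-1-\phi(q)}$, while $\#E_q=\binom{q-1}{(q-1)/2}\gg 2^{q-1}/\sqrt{q}$; since $\phi(q)\gg q/\log\log q$, the ratio $\#V_q/\#E_q$ tends to $0$, and Lemma \ref{partial-sum-lemma} transfers this to the ratio of partial sums. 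Your overall reduction (prove the per-$q$ bound, then average) matches the paper's, but the engine you propose for the per-$q$ bound rests on an unproved continuity statement, whereas the paper's engine is an unconditional linear-independence result.
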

This shows that Conjecture \ref{erdos-conj} is true with ``probability" one. \\

Another aspect of this question is the non-vanishing of special values of $L(s,f)$ for an Erd\H{o}s function $f$ at positive integers greater than $1$. In this direction, we ask the following question.

\begin{question}
Let $q>2$ and $k>1$ be integers and $f$ be an Erd\H{o}s  function mod $q$. Then is it true that $L(k,f) \neq 0$?
\end{question}

\begin{remark}
Let $f$ be an Erd\H{o}s function mod $q$ and $k>1$ be an integer. Suppose that $L(k,f)=0$. Then observe that
\begin{equation*}
    |f(1)| = \bigg| \sum_{n=2}^{\infty} \frac{f(n)}{n^k} \bigg| \leq \zeta(k) - 1.
\end{equation*}
This implies that $2 \leq \zeta(k)$, i.e., $k < 2$. This establishes that $L(k,f) \neq 0$ for any Erd\H{o}s function $f$ if $k \geq 2$. Thus, Erd\H{o}s's conjecture is interesting when $k = 1$.
\end{remark}



\section{Preliminaries}\label{prelim}

In this section, we introduce the results to be used later. 

\subsection{$L$-series attached to periodic functions}
Let $q$ be a fixed positive integer and $\overline{\mathbb{Q}}$ denote the algebraic closure of $\mathbb{Q}$. Consider $f : \mathbb{Z} \to \overline{\mathbb{Q}}$, periodic with period $q$. Define
\begin{equation*}
L(s,f) = \sum_{n=1}^{\infty} \frac{f(n)}{n^s}.
\end{equation*}
Observe that $L(s,f)$ converges absolutely for $\Re(s) > 1$. Since $f$ is periodic,
\begin{equation*}\label{hurw-form}
\begin{split}
L(s,f) & = \sum_{a=1}^q f(a)\sum_{k=0}^{\infty} \frac{1}{{(a + kq)}^s}\\
& = \frac{1}{q^s} \sum_{a=1}^q f(a) \zeta(s, a/q),
\end{split}
\end{equation*}
where $\zeta(s,x)$ is the Hurwitz zeta-function. For $\Re(s) > 1$ and $0 < x \leq 1$, the Hurwitz zeta-function is defined as
\begin{equation*}
\zeta(s,x) = \sum_{n=0}^{\infty} \frac{1}{{(n+x)}^s}.
\end{equation*}
In 1882, Hurwitz \cite{hurwitz} obtained the analytic continuation and functional equation of $\zeta(s,x)$. He proved:
\begin{theorem}
The Hurwitz zeta-function, $\zeta(s,x)$ extends analytically to the entire complex plane except for a simple pole at $s=1$ with residue $1$. In particular, 
\begin{equation}\label{hurwitz-thm}
\zeta(s,x) = \frac{1}{s-1} - \Psi(x) + O(s-1),
\end{equation}
where $\Psi$ is the Digamma function, which is the logarithmic derivative of the gamma function.
\end{theorem}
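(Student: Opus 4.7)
The plan is to obtain both the meromorphic continuation and the Laurent expansion at $s=1$ via a single application of the Euler--Maclaurin summation formula to $g(t) = (t+x)^{-s}$. An alternative route through Riemann's contour integral for $\Gamma(s)\zeta(s,x) = \int_0^\infty t^{s-1}e^{-xt}/(1-e^{-t})\,dt$ would also work, but Euler--Maclaurin extracts the full polar and constant data in one stroke.

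\textbf{Step 1: meromorphic continuation.} For each integer $K \ge 1$ and $\Re(s) > 1$, Euler--Maclaurin applied to $g$ on $[0,\infty)$ yields
\begin{equation*}
\zeta(s,x) \;=\; \frac{x^{1-s}}{s-1} + \frac{x^{-s}}{2} + \sum_{k=1}^{K} \frac{B_{2k}}{(2k)!}\,\frac{\Gamma(s+2k-1)}{\Gamma(s)}\,x^{1-s-2k} + R_K(s,x),
\end{equation*}
where $R_K(s,x)$ is an integral of $(t+x)^{-s-2K}$ against a periodic Bernoulli function, and is holomorphic on $\Re(s) > 1 - 2K$. Since $K$ is arbitrary, this identity extends $\zeta(s,x)$ to a meromorphic function on $\mathbb{C}$ whose only singularity is the simple pole at $s = 1$, with residue $1$ coming from the first summand.

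\textbf{Step 2: identifying the constant term.} I would compare $\zeta(s,x)$ with $\zeta(s) = \zeta(s,1)$. For $\Re(s) > 0$ the termwise difference
\begin{equation*}
\zeta(s,x) - \zeta(s) \;=\; \sum_{n=0}^{\infty}\left(\frac{1}{(n+x)^s} - \frac{1}{(n+1)^s}\right)
\end{equation*}
converges absolutely, since each summand is $O(n^{-1-\Re s})$, and hence is holomorphic on this half-plane, so the poles on the two sides cancel. Passing to $s \to 1$ and invoking Gauss's series $\Psi(x) + \gamma = \sum_{n=0}^{\infty}\bigl(\tfrac{1}{n+1} - \tfrac{1}{n+x}\bigr)$ gives
\begin{equation*}
\lim_{s \to 1}\bigl(\zeta(s,x) - \zeta(s)\bigr) \;=\; -\Psi(x) - \gamma.
\end{equation*}
Combining this with the classical expansion $\zeta(s) = (s-1)^{-1} + \gamma + O(s-1)$ produces the stated Laurent series for $\zeta(s,x)$.

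\textbf{Main obstacle.} No step is technically deep. The two places to be careful are the bookkeeping in the Euler--Maclaurin remainder, to be sure that letting $K$ grow does not introduce spurious singularities as one crosses $\Re(s) = 0$, and the justification that dominated convergence permits the termwise limit $s \to 1$ in the comparison series. Both reduce to elementary size estimates on $(n+x)^{-s} - (n+1)^{-s}$ and on the periodic Bernoulli kernel.
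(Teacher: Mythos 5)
The paper does not prove this statement at all: it is quoted as a classical result of Hurwitz (1882) in the preliminaries, with the reader referred to the original paper, so there is no in-text argument to compare against. Your proposal is a correct and complete standard proof. Step 1 (Euler--Maclaurin applied to $g(t)=(t+x)^{-s}$) correctly produces the continuation: the leading term $x^{1-s}/(s-1)$ accounts for the simple pole with residue $1$, the remainder integral against the periodic Bernoulli function is holomorphic for $\Re(s)>1-2K$, and letting $K$ grow gives the full plane with no spurious singularities since consecutive truncations agree on overlapping half-planes by analytic continuation. Step 2 is the right way to extract the constant term: trying to read it off the Euler--Maclaurin expansion directly would force you to recognize the (exact, finite-$K$) Euler--Maclaurin formula for $\Psi(x)$ itself, whereas comparing with $\zeta(s)=\zeta(s,1)$ reduces everything to Gauss's series $\Psi(x)+\gamma=\sum_{n\ge 0}\bigl(\tfrac{1}{n+1}-\tfrac{1}{n+x}\bigr)$ and the classical expansion $\zeta(s)=(s-1)^{-1}+\gamma+O(s-1)$; the cancellation of the two $\gamma$'s gives exactly \eqref{hurwitz-thm}. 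The only point worth stating explicitly is that the difference series converges locally uniformly on $\Re(s)>0$ (your $O(n^{-1-\Re s})$ bound gives this), so it defines a holomorphic function there and its value at $s=1$ is indeed the constant term of its Taylor expansion --- dominated convergence is not even needed once local uniform convergence is in hand. Your alternative suggestion, the contour-integral representation of $\Gamma(s)\zeta(s,x)$, is essentially Hurwitz's original route; either is acceptable here.
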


Thus, $\sum_{n=1}^\infty {f(n)}/{n}$ exists whenever  $\sum_{a=1}^q f(a) = 0$, which we will assume henceforth. This makes $L(s,f)$ an entire function. Moreover, $L(1,f)$ can be expressed as a combination of values of the Digamma function. Using \eqref{hurwitz-thm} we get,
\begin{equation}\label{digamma}
L(1,f) = - \frac{1}{q} \sum_{a=1}^q f(a) \Psi \bigg( \frac{a}{q} \bigg).
\end{equation}

Taking the logarithmic derivative of the identity $\Gamma(x)\Gamma(1-x) = \pi / \sin (\pi x)$, we obtain that 
\begin{equation*}
    \Psi(x) - \Psi(1-x) = \pi \cot (\pi x).
\end{equation*}
Therefore, if $f$ is an odd function, we can pair the terms corresponding to $a$ and $q-a$ in \eqref{digamma} to get that
\begin{equation*}\label{L(1,f)-odd}
    L(1,f)  = \frac{\pi}{q} \sum_{a=1}^{(q-1)/2} f(a) \cot \bigg( \frac{a \pi}{q} \bigg),
\end{equation*}
as $\cot(\pi (1-x)) = - \cot \pi x$, $f(q-a)=-f(a)$ and $\cot(\pi/2)=0$. Similarly, $L(k,f)$ can be evaluated if $k$ and $f$ have the same parity, i.e., if both $k$ and $f$ are either odd or even. In particular,
\begin{equation}\label{L(k,f)-same}
    L(k,f) = - \, \frac{{(-1)}^k}{(k-1)! \, q^k} \, \sum_{a=1}^{(q-1)/2} f(a) {\bigg( \frac{d^{(k-1)}}{dz^{(k-1)}}(\pi \cot \pi z)\bigg\vert_{z=a/q} \bigg)}.
\end{equation}
For a proof of the above fact, we refer the reader to \cite[Theorem 10]{ram-saradha-2}.\\

\subsection{Higher dimensional Dedekind sums}
In the course of evaluating moments of $L(k,f)$, we also encounter a generalization of the higher dimensional Dedekind sums which were introduced by D. Zagier \cite{zagier}. These sums, studied by A. Bayad and A. Raouj \cite{bayad-raouj}, are defined as follows. For $i=0, \cdots, d$, let $a_0$ be a positive integer, $a_1, \cdots, a_d$ be positive integers co-prime to $a_0$ and $m_0, \cdots, m_d$ be non-negative integers. Define
\begin{align}
    & C(a_i \, ; \, a_0, \cdots, \widehat{a_i}, \cdots, a_d \, \vert \, m_i \, ; \, m_0, \cdots, \widehat{m_i}, \cdots, m_d) \nonumber \\
    & := \begin{cases} \label{general-dedekind-sum}
    \frac{1}{a_i^{m_i+1}} \sum_{k=1}^{a_i-1} \prod\limits_{\substack{j=0, \\ j \neq i}}^{d} \bigg( \frac{d^{m_j}}{dz^{m_j}} (\cot z)\bigg)\bigg\vert_{z = \pi k a_j/a_i}, & \text{ if } a_i \geq 2, \\
    0 & \text{ if } a_i=1.  
    \end{cases}
\end{align}
Here $\widehat{x_n}$ means that the term $x_n$ is omitted. It can be shown that the number given by \eqref{general-dedekind-sum} is in fact rational. For a proof of this fact and further properties of the higher dimensional Dedekind sums, see \cite{zagier} and \cite{bayad-raouj}. Moreover, these sums satisfy a reciprocity law given by:
\begin{theorem}\cite[Theorem 2.0.2]{bayad-raouj}\label{reciprocity}
Let $d$ be a positive integer, $a_0, \cdots, a_d$ be pairwise positive integers and $m_0, \cdots, m_d$ be non-negative integers. Let $B_n$ denote the $n^{\text{th}}$ Bernoulli number. Assume that the integer $M = d + m_0 + \cdots + m_d$ is even. Then we have
\begin{align*}
    \sum_{i=0}^d & {(-1)}^{m_i} \, m_i!  \, \sum_{\substack{l_0, \cdots, \widehat{l_i}, \cdots, l_d \geq 0 \\ l_0+\cdots+\widehat{l_i}+\cdots+l_d=m_i}} \bigg( \prod_{\substack{j=0 \\ j \neq i}}^d \frac{a_j^{l_j}}{l_j!}\bigg) \\
    & \times C(a_i \, ; \, a_0, \cdots, \widehat{a_i}, \cdots, a_d \, \vert \, m_i \, ; \, m_0+l_0, \cdots, \widehat{m_i+l_i}, \cdots, m_d+l_d) \\
    & = \begin{cases}
    - \big( R + {(-1)}^{d/2} \big) &\text{ if all } m_i \text{ are zero,}\\
    -  R & \text{ otherwise,}
    \end{cases}
\end{align*}
where \footnote{The minus sign in front of the right hand side is missing in the statement of \cite[Theorem 2.0.2]{bayad-raouj} but is evident from the proof.}
\begin{equation*}
    R = \frac{{(-1)}^{M/2} \, 2^M}{\prod_{i=0}^d a_i^{m_i+1}} \, \, \sum_{\substack{j_0, \cdots, j_d \geq 0 \\ j_0+\cdots+j_d=M/2}} \prod_{i=0}^{d} a_i^{2j_i} A_{i, j_i}
\end{equation*}
and
\begin{equation*}
    A_{i,j_i} =
    \begin{cases}
    \frac{B_{2j_i}}{(2j_i - 1 - m_i)! (2 j_i)} & \text{ if } j_i \text{ is an integer } \geq (m_i+1)/2, \\
    {(-1)}^{m_i} \, m_i! & \text{ if } j_i =0, \\
    0 & \text{ otherwise.}
    \end{cases}
\end{equation*}
\end{theorem}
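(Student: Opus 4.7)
The plan is to establish the reciprocity by a single-function residue calculation in the tradition of Rademacher and Zagier, applied to the meromorphic auxiliary function
\[
F(z) := \prod_{i=0}^{d} G_{m_i}(\pi a_i z), \qquad G_m(u) := \frac{d^m}{du^m}\cot u.
\]
Since $\cot$ has period $\pi$, $F$ is $1$-periodic in $z$, and its poles lie on the real axis at $\{k/a_i : k \in \mathbb{Z},\, 0 \leq i \leq d\}$. Pairwise coprimality of $a_0,\dots,a_d$ guarantees that $z \equiv 0 \pmod{1}$ is the only point at which more than one factor of $F$ is singular. The strategy is to apply the residue theorem to a tall period rectangle and identify the three resulting pieces with the left-hand side of the theorem, the quantity $R$, and the correction $(-1)^{d/2}$ in the all-$m_i=0$ case.

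First I would compute the off-origin residues. Near $z = k/a_i$ with $1 \leq k \leq a_i - 1$ only the $i$-th factor is singular, with Laurent principal part $\frac{(-1)^{m_i} m_i!}{\pi^{m_i+1} a_i^{m_i+1}(z - k/a_i)^{m_i+1}}$. The residue of $F$ therefore equals a constant times the $m_i$-th derivative of $\prod_{j \neq i} G_{m_j}(\pi a_j z)$ at $z = k/a_i$. The multinomial Leibniz rule converts this derivative into the composition sum $\sum_{l_0+\cdots+\widehat{l_i}+\cdots+l_d = m_i}$ weighted by $\prod_{j \neq i} (\pi a_j)^{l_j}/l_j!$; summing $k$ from $1$ to $a_i-1$ collapses the inner sum to $a_i^{m_i+1}\,C(a_i;\ldots \mid m_i; m_0+l_0,\ldots)$ by the very definition of the generalized Dedekind sum. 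Summing over $i$ identifies the total off-origin residue with the left-hand side of the theorem divided by $\pi$.

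Second, I would compute the residue at $z = 0$ by extracting the coefficient of $z^{-1}$ from the Laurent product. Using the Bernoulli expansion $\cot u = \sum_{n \geq 0} \frac{(-1)^n\, 4^n\, B_{2n}}{(2n)!}\, u^{2n-1}$ together with term-by-term differentiation, the coefficient of $z^{2j-1-m_i}$ in $G_{m_i}(\pi a_i z)$ works out to $(-1)^j\, 4^j\, A_{i,j}\, (\pi a_i)^{2j-1-m_i}$, with $A_{i,j}$ precisely as defined in the statement (the three-case definition absorbs the cases $j=0$, $j \geq (m_i+1)/2$, and the intermediate range where the polynomial derivative kills the term). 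Matching the exponent to $-1$ in the product forces the constraint $\sum_i j_i = M/2$, which explains the parity hypothesis on $M$. Collecting the $\pi$-, $a_i$-, and $2^{2j}$-factors then identifies $\mathrm{Res}_{z=0}F$ as $R/\pi$.

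Finally, I would apply the residue theorem to a rectangle $R_T = [-\varepsilon,\, 1-\varepsilon] \times [-T, T]$, with $\varepsilon > 0$ small enough that no real pole lies on the vertical sides. Periodicity cancels the two vertical-side integrals. Along the horizontal sides, $\cot(\pi a_i(x \pm iT)) \to \mp i$ as $T \to \infty$: if any $m_i \geq 1$, the corresponding $G_{m_i}$ factor decays exponentially, so both horizontal integrals vanish and the residue identity gives the ``otherwise'' case $\mathrm{LHS} = -R$. If all $m_i = 0$, the horizontal limits combine to $i^{d+1} - (-i)^{d+1}$, which (since $d = M$ is even) evaluates to $2i\,(-1)^{d/2}$ and furnishes the extra $(-1)^{d/2}$ contribution in the ``all $m_i = 0$'' case. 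The main obstacle is the sign-and-factorial bookkeeping in the first two steps — making the $\pi$-powers, the $4^j$-versus-$2^M$ rescalings, and the alternating signs of the Bernoulli terms line up exactly with the definitions of $C$ and of $R$; a secondary technical point is a dominated-convergence estimate justifying the pointwise limit of $\cot(\pi a_i(x \pm iT))$ inside the horizontal integrals uniformly in $x \in [-\varepsilon, 1-\varepsilon]$.
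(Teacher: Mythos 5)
The paper offers no proof of this statement: it is imported verbatim (modulo a sign, per the footnote) from Bayad--Raouj, so there is no in-paper argument to compare against. Your proposal reconstructs the standard Zagier-style contour-integration proof, which is in fact the method of the cited source, and the outline is sound: the principal part $(-1)^{m_i}m_i!\,(\pi a_i)^{-(m_i+1)}(z-k/a_i)^{-(m_i+1)}$ at the simple-factor poles combined with the Leibniz expansion does produce exactly the composition sum $\sum_{l_0+\cdots+\widehat{l_i}+\cdots+l_d=m_i}$ with the weights $\prod_{j\neq i}a_j^{l_j}/l_j!$ and a single surviving factor of $\pi^{-1}$; the Laurent coefficient extraction at the origin via $\cot u=\sum_{n\ge 0}(-1)^n4^nB_{2n}u^{2n-1}/(2n)!$ reproduces the three-case $A_{i,j_i}$ and the constraint $\sum_i j_i=M/2$, giving $\mathrm{Res}_{z=0}F=R/\pi$; and the horizontal edges vanish when some $m_i\ge 1$ and contribute $i^{d+1}-(-i)^{d+1}=2i(-1)^{d/2}$ otherwise.

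One substantive remark: if you push your computation through, the all-$m_i=0$ case comes out as $\mathrm{LHS}=-R+(-1)^{d/2}$, not $-\bigl(R+(-1)^{d/2}\bigr)$ as transcribed in the statement. Your sign is the correct one. This can be checked against the classical two-variable Dedekind reciprocity: taking $d=2$, $(a_0,a_1,a_2)=(1,a,b)$ with $\gcd(a,b)=1$ and all $m_i=0$, the left side equals $4\bigl(s(a,b)+s(b,a)\bigr)$ and one computes $R=-(1+a^2+b^2)/(3ab)$, so $-R+(-1)^{d/2}=\tfrac{1+a^2+b^2}{3ab}-1$, which is the classical identity, whereas $-R-(-1)^{d/2}$ is off by $2$. (The degenerate case $a_0=a_1=a_2=1$ gives the same conclusion even faster.) This discrepancy is harmless for the paper, which only uses the leading term of $S_{q,k}^{(u)}$ as a polynomial in $q$, where the bounded term $(-1)^{d/2}$ is negligible. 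The only points of your sketch that genuinely need care in a full write-up are the ones you already flag: the uniform exponential decay of the differentiated cotangent factors on the horizontal edges, and the bookkeeping of the powers of $\pi$ and of $4^{j_i}=2^{2j_i}$ against the prefactor $2^M$ in $R$.
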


\subsection{Partial sum lemma} We will use the following elementary exercise from \cite[Problem 70, pg. 16]{polya-szego}.
\begin{lemma}\label{partial-sum-lemma}
Let the sequences $a_n$ and $b_n$ satisfy the conditions:
\begin{equation*}
    b_n > 0, \hspace{3mm} n = 1,2, \cdots \hspace{1mm} ; \hspace{2mm}  b_1 + b_2 + b_3 + \cdots + b_n + \cdots \text{ diverges;}
\end{equation*}
and
\begin{equation*}
    \lim_{n \rightarrow \infty} \frac{a_n}{b_n} = s.
\end{equation*}
Then
\begin{equation*}
    \lim_{n \rightarrow \infty} \frac{a_1 + a_2 + \cdots + a_n}{b_1 + b_2 + \cdots + b_n} = s.
\end{equation*}
\end{lemma}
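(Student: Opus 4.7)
The plan is to run a standard $\varepsilon$-$N$ Stolz--Ces\`aro style argument: split the partial sum into an initial finite segment (which contributes a bounded constant) and a tail segment (on which the ratio is already close to $s$), then divide by a denominator that tends to infinity.

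First, I would fix $\varepsilon > 0$ and use the hypothesis $a_n/b_n \to s$ to choose $N = N(\varepsilon)$ such that $|a_n - s b_n| < \varepsilon\, b_n$ for every $n > N$. Writing $B_n := b_1 + b_2 + \cdots + b_n$, I would decompose
\begin{equation*}
    \sum_{k=1}^{n} a_k - s\, B_n \; = \; \underbrace{\sum_{k=1}^{N} (a_k - s\, b_k)}_{=: \, C_N} \; + \; \sum_{k=N+1}^{n} (a_k - s\, b_k),
\end{equation*}
where $C_N$ is a constant independent of $n$ once $N$ is fixed. Using positivity of $b_k$, the tail is controlled by
\begin{equation*}
    \left| \sum_{k=N+1}^{n} (a_k - s\, b_k) \right| \; < \; \varepsilon \sum_{k=N+1}^{n} b_k \; \leq \; \varepsilon\, B_n.
\end{equation*}

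Since $b_k > 0$ we have $B_n > 0$ for every $n$, so dividing by $B_n$ yields
\begin{equation*}
    \left| \frac{a_1 + a_2 + \cdots + a_n}{B_n} - s \right| \; \leq \; \frac{|C_N|}{B_n} + \varepsilon.
\end{equation*}
Now I would invoke the divergence hypothesis $\sum_{k \geq 1} b_k = \infty$ together with $b_k > 0$ to conclude $B_n \to \infty$, so that $|C_N|/B_n \to 0$ as $n \to \infty$ (with $N$ held fixed). Taking $\limsup$ gives
\begin{equation*}
    \limsup_{n \to \infty} \left| \frac{a_1 + a_2 + \cdots + a_n}{B_n} - s \right| \; \leq \; \varepsilon,
\end{equation*}
and since $\varepsilon > 0$ was arbitrary, the limit equals $s$.

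There is no substantive obstacle; the only subtlety is the order of quantifiers, namely that one must choose $N$ first (so that $C_N$ is frozen as a constant) and only afterwards let $n \to \infty$. The argument is completely elementary, does not require any monotonicity assumption on $(a_n)$ or $(b_n)$, and uses only positivity of $b_n$ and divergence of $\sum b_n$ beyond the asymptotic hypothesis on the ratio.
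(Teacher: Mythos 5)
Your proof is correct and complete: the $\varepsilon$-$N$ splitting into a frozen initial segment plus a tail controlled by $\varepsilon B_n$, followed by $B_n \to \infty$, is exactly the standard argument for this Ces\`aro-type lemma, and you handle the quantifier order properly. The paper itself offers no proof (it simply cites the result as an exercise from P\'olya--Szeg\H{o}), so there is nothing to contrast with; your argument is the expected one.
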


\section{Proof of Theorems}

\subsection{Proof of Theorem \ref{distribution}}
Our proof is based on the following lemma from \cite[Theorem 30.2, pg. 390]{billingsley}.
\begin{lemma}\label{main-lemma}
Suppose the distribution of $X$ is determined by its moments, that the $X_n$ have moments of all orders and that $\lim_n E[ X_n^r] = E[X^r]$ for $r=1,2, \cdots$. Then $X_n \Rightarrow X$, i.e., the distribution of $X_n$ converges to the distribution of $X$ wherever the distribution of $X$ is continuous.
\end{lemma}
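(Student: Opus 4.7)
The plan is to carry out the classical subsequence argument for the method of moments: establish tightness of the laws of $X_n$, extract weakly convergent subsequences, and show every such subsequential limit must coincide in distribution with $X$.

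First I would establish tightness of the sequence of distributions $\mu_n$ of $X_n$. Since $E[X_n^2] \to E[X^2] < \infty$, the second moments are uniformly bounded by some constant $C$. Chebyshev's inequality then gives $\mu_n(\{|x| > K\}) \leq C/K^2$ uniformly in $n$, which is tightness on $\mathbb{R}$. By Helly's selection theorem (equivalently Prokhorov's theorem on $\mathbb{R}$), every subsequence of $\{\mu_n\}$ admits a further subsequence $\{\mu_{n_k}\}$ converging weakly to some probability measure $\nu$; let $Y$ be a random variable with law $\nu$.

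Next I would show $\nu$ has the same moments as $X$. The subtlety here is that $x \mapsto x^r$ is unbounded, so weak convergence $\mu_{n_k} \Rightarrow \nu$ does not directly give $\int x^r \, d\mu_{n_k} \to \int x^r \, d\nu$. To justify this passage I would invoke the fact that the hypothesis gives convergence of \emph{all} moments, so in particular $\sup_k E[X_{n_k}^{r+2}] < \infty$. This uniform bound on a higher moment yields uniform integrability of $\{X_{n_k}^r\}_k$, since $E[|X_{n_k}|^{r} \mathbf{1}_{|X_{n_k}| > M}] \leq E[|X_{n_k}|^{r+2}]/M^2$ tends to zero uniformly in $k$ as $M \to \infty$. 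Combining weak convergence with uniform integrability (for instance via Skorokhod coupling, or by truncating the test function $x^r$ and estimating the tail separately) then gives $E[Y^r] = \lim_k E[X_{n_k}^r] = E[X^r]$ for every $r \geq 1$.

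Finally, the hypothesis that the distribution of $X$ is determined by its moments forces $\nu$ to coincide with the law of $X$. Hence every subsequence of $\{\mu_n\}$ has a further subsequence converging weakly to the law of $X$; since weak convergence on $\mathbb{R}$ is metrizable (e.g.\ by the L\'evy metric), a standard topological lemma then upgrades this to convergence of the full sequence $\mu_n$ to the law of $X$, which gives $F_n(x) \to F(x)$ at every continuity point of $F$. The principal obstacle in the whole argument is the uniform integrability step: weak convergence alone does not imply moment convergence, and the global hypothesis that moments of every order converge (not just the specific $r$ of interest) is used precisely to control the $(r+2)$-nd moment and secure uniform integrability of $X_{n_k}^r$.
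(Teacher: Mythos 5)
Your argument is correct and is essentially the proof the paper relies on: the lemma is quoted verbatim from Billingsley (Theorem 30.2), and the proof given there is exactly this tightness--Helly--uniform-integrability--moment-determinacy chain followed by the subsequence principle. The only cosmetic adjustment is that the uniform integrability bound is cleanest with an even-order moment, e.g.\ $E\bigl[|X_{n_k}|^{r}\mathbf{1}_{|X_{n_k}|>M}\bigr]\le E\bigl[X_{n_k}^{2r+2}\bigr]/M^{r+2}$, so that the dominating quantity is literally one of the moments assumed to converge rather than an absolute moment that must first be controlled by Cauchy--Schwarz.
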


Thus, we first show that
\begin{equation*}
    M(n) := \lim_{r \rightarrow \infty} m_{2r+1}(n) = \lim_{r \rightarrow \infty} \frac{\sum_{f \in E_{2r+1}^{(k)}}L(k,f)^n}{\# E_{2r+1}^{(k)}}
\end{equation*}
is finite. In order to show that the limiting distribution is determined by its moments, we use a general theorem from \cite[Theorem 30.1, pg. 388]{billingsley} stated below.
\begin{theorem}\label{radius-cvg}
Let $\mu$ be a probability measure on the line having finite moments $\alpha_k = \int_{- \infty}^{\infty} x^k \mu(dx)$ of all orders. If the power series $\sum_{k=0}^{\infty} \alpha_k \, r^k/k!$ has a positive radius of convergence, then $\mu$ is determined by its moments.
\end{theorem}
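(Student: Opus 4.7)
The plan is to upgrade the moment hypothesis to the statement that the characteristic function $\phi_\mu$ of $\mu$ extends holomorphically to a horizontal strip around the real axis, and then to invoke the identity theorem for holomorphic functions together with the uniqueness theorem for Fourier transforms of probability measures.

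First I would fix $R > 0$ strictly less than the radius of convergence of $\sum \alpha_k r^k/k!$. Since a power series converges absolutely inside its disc of convergence, one gets $\sum |\alpha_k| r^k/k! < \infty$ for every $r < R$. This yields control of the absolute moments: the even absolute moments coincide with the nonnegative numbers $\alpha_{2j}$, while the odd absolute moments $\int |x|^{2j+1}\, d\mu$ are bounded by $\alpha_{2j} + \alpha_{2j+2}$ via the pointwise inequality $|x|^{2j+1} \leq |x|^{2j} + |x|^{2j+2}$. A term-by-term comparison with the convergent even-indexed moment series, combined with monotone convergence, then gives $\int e^{r|x|}\, d\mu < \infty$ for $|r| < R$. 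This exponential integrability lets me define $\phi_\mu(z) := \int e^{izx}\, d\mu$ on the open strip $S_R := \{z \in \mathbb{C} : |\Im z| < R\}$. A routine Morera/Fubini argument (or alternatively differentiation under the integral sign, justified by the bound above) shows that $\phi_\mu$ is holomorphic on $S_R$, with Taylor expansion at the origin equal to $\phi_\mu(z) = \sum_{k \geq 0} i^k \alpha_k z^k/k!$.

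To close the argument, I would take an arbitrary probability measure $\nu$ with the same moment sequence $\{\alpha_k\}$. The hypothesis of the theorem then holds verbatim for $\nu$, so the previous steps apply to produce a holomorphic function $\phi_\nu$ on the same strip $S_R$ whose Taylor coefficients at $0$ agree with those of $\phi_\mu$. Since $S_R$ is connected, the identity theorem forces $\phi_\mu \equiv \phi_\nu$ on $S_R$, and in particular on the real axis; the classical uniqueness theorem for characteristic functions of probability measures on $\mathbb{R}$ then yields $\mu = \nu$.

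The main obstacle is the first step: the hypothesis only controls the signed moments $\alpha_k$, whereas defining the moment generating function requires controlling the absolute moments $\int |x|^k\, d\mu$ for odd $k$ as well, and this must be done without losing the radius. Once exponential integrability is in hand, the holomorphic extension and the identity-theorem plus Fourier-uniqueness step are essentially formal.
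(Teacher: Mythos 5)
Your argument is correct and complete. Note, however, that the paper does not prove this statement at all: it is quoted verbatim as Theorem 30.1 of Billingsley's \emph{Probability and Measure} and used as a black box, so there is no internal proof to compare against. What you have written is essentially the standard (Billingsley) proof. The one genuinely delicate point --- passing from the signed moments $\alpha_k$ to the absolute moments so that $\int e^{r|x|}\,\mu(dx)<\infty$ --- you handle correctly with $|x|^{2j+1}\leq |x|^{2j}+|x|^{2j+2}$; the reindexed comparison series $\sum_j \alpha_{2j+2}\,r^{2j+1}/(2j+1)!$ picks up a factor of $2j+2$ relative to the hypothesis series, but this is harmless since multiplying coefficients by $k$ does not shrink the radius of convergence (Billingsley instead uses the Cauchy--Schwarz bound $\beta_{2j+1}\leq(\alpha_{2j}\alpha_{2j+2})^{1/2}$; either works). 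The only stylistic difference from the textbook proof is that you extend $\phi_\mu$ holomorphically to the strip $|\Im z|<R$ and invoke the identity theorem there, whereas Billingsley stays on the real line, proves the Taylor expansion $\phi(t+h)=\sum_k \phi^{(k)}(t)h^k/k!$ with a uniform radius valid at every real $t$, and propagates the equality $\phi_\mu=\phi_\nu$ from a neighbourhood of $0$ to all of $\mathbb{R}$ in finitely many steps. The two routes are interchangeable; yours is slightly cleaner if one is willing to use complex analysis, while the real-variable version is self-contained within measure-theoretic probability.
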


To begin with, we evaluate $$m_q(n):= \frac{\sum_{f \in E_q^{(k)}}L(k,f)^n}{\# E_q^{(k)}},$$ for any non-negative integer $n$ in terms of sums of the form \eqref{general-dedekind-sum}. By \eqref{L(k,f)-same},
\begin{align*}
   & \sum_{f \in E_{q}^{(k)}} {L(k,f)}^n \\
   & = {\frac{{(-1)}^{kn}}{{\big((k-1)!\big)}^n \,  q^{kn}}}\, \sum_{a_1, \cdots, a_n =1}^{(q-1)/2} \prod_{j=1}^n {\bigg( \frac{d^{(k-1)}}{dz^{(k-1)}}(\pi \cot \pi z)\bigg\vert_{z=a_j/q} \bigg)} \sum_{f \in E_q^{(k)}} f(a_1) \cdots f(a_n), \\
    & = {\frac{{(-1)}^{kn} \, \pi^k}{{\big((k-1)!\big)}^n \,  q^{kn}}}\, \sum_{a_1, \cdots, a_n=1}^{(q-1)/2} \prod_{j=1}^n {\bigg( \frac{d^{(k-1)}}{dz^{(k-1)}}( \cot z)\bigg\vert_{z=a_j/q} \bigg)} \sum_{f \in E_q^{(k)}} f(a_1) \cdots f(a_n).
\end{align*}

Note that if $f \in E_q^{(k)}$ then $-f \in E_q^{(k)}$. Thus, if $n$ is odd, the inner sum becomes zero by pairing terms corresponding to $f$ and $-f$. Therefore, $m_q(n)=0$ when $n$ is odd.

Henceforth, let $n$ be even and $p(n)$ denote the number of partitions of $n$. The above sum can be partitioned into $p(n)$ many sums according to the equality of the indices $a_1, \cdots, a_n$. In particular, for a partition $\underline{\lambda}=(\lambda_1, \cdots, \lambda_m)$ of $n$, the inner sum becomes
\begin{equation}\label{inner-partition-sum}
    \sum_{f \in E_q^{(k)}} f(a_1)^{\lambda_1} \cdots f(a_m)^{\lambda_m},
\end{equation}
where $1 \leq a_1, \cdots, a_m \leq q-1$ are all distinct. Clearly, the above sum is $\# E_q^{(k)}$ when $\lambda_l$ is even for all $1 \leq l \leq m$. Now, without loss of generality, suppose that $\lambda_1$ is odd. Since even and odd functions are determined by their values on $1 \leq a \leq r$, for any $f \in E_q^{(k)},$ there is a unique $f^{-} \in E_q^{(k)}$ such that \begin{equation*}
    f^{-}(n) =
    \begin{cases}
    f(n) & \text{ if } n \neq a_1, \\
    -f(n) & \text{ if } n = a_1,
    \end{cases}
\end{equation*} 
for $1 \leq n \leq r$. Pairing up the terms corresponding to $f$ and $f^{-}$ in \eqref{inner-partition-sum}, we obtain that
\begin{equation*}
    \sum_{f \in E_q^{(k)}} f(a_1)^{\lambda_1} \cdots f(a_m)^{\lambda_m} = 
    \begin{cases}
    \# E_q^{(k)}, & \text{ if } \lambda_1, \cdots, \lambda_m \text{ are even},\\
    0 & \text{ otherwise.}
    \end{cases}
\end{equation*}

Thus, only those terms corresponding to partitions consisting of even parts survive and one can write 
\begin{align*}
    & \sum_{f \in E_{q}^{(k)}} {L(k,f)}^n \\
    & =  \frac{ \pi^k}{{\big((k-1)!\big)}^{2n} \, 2^{2n} \, q^{2kn}} \, \sum_{\substack{\underline{\lambda}=(\lambda_1, \cdots, \lambda_m),\\ \underline{\lambda} \in P_n}} \, \sideset{}{'}\sum_{a_1, \cdots, a_m=1}^{q-1} \, \prod_{j=1}^m  {\bigg( \frac{d^{(k-1)}}{dz^{(k-1)}}( \cot z)\bigg\vert_{z=a_j/q} \bigg)}^{2 \lambda_j},
\end{align*}
where $\sideset{}{'}\sum$ denotes that the sum is taken over distinct $1 \leq a_1, \cdots, a_m \leq q-1$. The inner sum can be expressed in terms of generalized higher dimensional Dedekind sums as follows. For any positive integer $u$, define
\begin{equation*}
    S_{q,k}^{(u)} := q \,\, C(q \, ; \, \underbrace{1,1, \cdots,1}_{2u \text{ times}} \, \vert \, 0 \, ; \, \underbrace{k-1, \cdots, k-1}_{2u \text{ times}}).
\end{equation*}
With the notation as in Section \ref{intro}, define $\mathfrak{S}_{q,k}^{(\underline{\eta})}$ inductively as follows. $\mathfrak{S}_{q,k}^{(n)}:= {S_{q,k}^{(n)}}$ and for any $\underline{\lambda} = (\lambda_1, \cdots, \lambda_m)$,
\begin{equation}\label{script-S-def}
    \mathfrak{S}_{q,k}^{(\underline{\lambda})} := S_{q,k}^{(\lambda_1)} \, \cdots \, S_{q,k}^{(\lambda_m)} - \sum_{\underline{\eta} \leq \underline{\lambda}} {\underline{\lambda} \choose \underline{\eta}} \, \, \mathfrak{S}_{q,k}^{(\underline{\eta})}.
\end{equation}
Thus, we have that
\begin{equation*}
    m_q(2n) = \frac{\pi^k}{{\big((k-1)!\big)}^{2n} \, 2^{2n} \, q^{2kn}}  \hspace{1mm} \bigg\{ \sum_{\underline{\lambda} \in P_n} \mathfrak{S}_{q,k}^{\underline{\lambda}} \bigg\}.
\end{equation*}

To understand the asymptotic behaviour of $m_q(2n)$ as $q \rightarrow \infty$, we use the explicit evaluation of $S_{q,k}^{(u)}$ using Theorem \ref{reciprocity}. Thus,
\begin{align*}
    S_{q,k}^{(u)} & = \sum_{t=1}^{q-1}  {\bigg( \frac{d^{k-1}}{dz^{k-1}} (\cot z)\bigg)\bigg\vert}^{2u}_{z = \pi t/q}\\
    &=
    \begin{cases}
    - q \, \big(R + {(-1)}^u \big) & \text{ if } k=1, \\
    - q \, R & \text{ otherwise,}
    \end{cases}
\end{align*}
where 
\begin{equation*}
    R = \frac{{(-1)}^{uk} \, 2^{2uk}}{q} \sum_{\substack{j_0, \cdots, j_{2u}\\ j_0+\cdots+j_{2u}=uk}} \alpha_{j_0,\cdots,j_{2u}} \, q^{2j_0},
\end{equation*}
for $\alpha_{j_0,\cdots,j_{2u}} \in \mathbb{Q}$ given by Theorem \ref{reciprocity}. In particular, $S_{q,k}^{(u)}$ is a polynomial in $q$ of degree $2uk$ with leading coefficient $-{(-4)}^{uk} \alpha_{uk,0,\cdots,0}$, given explicitly by Theorem \ref{reciprocity}. Thus,
\begin{equation*}\label{asymptotic}
    S_{q,k}^{(u)} \sim \bigg( {2}^{2uk} \, {\big( (k-1)! \big)}^{2u} \, {(-1)}^{uk+1} \, \frac{B_{2uk}}{(2uk)!} \bigg) \, q^{2uk}, \hspace{2mm} \text{ as } q \rightarrow \infty.
\end{equation*}

Using this, \eqref{script-S-def} and the definition of $c(\underline{\lambda})$ \eqref{c(lambda)}, we get that
\begin{equation*}
    \mathfrak{S}_{q,k}^{(\underline{\lambda})} \sim  c(\underline{\lambda}) \, q^{2nk},  
\end{equation*}
as $q$ tends to infinity. Hence,
\begin{equation*}
   M(2n) = \lim_{q \rightarrow \infty} m_q(2n) = \frac{\pi^{2nk}}{{((k-1)!)}^{2n} \, 2^{2n}} \, \bigg(\sum_{\underline{\lambda} \in P_n} c(\underline{\lambda}) \bigg).
\end{equation*}

Since the limit as $q$ tends to infinity of $m_q(2n)$ exists, by Lemma \ref{main-lemma}, there exists a limiting distribution $F(x)$ whose odd moments are zero and even moments are given by $M(2n)$. Thus, the characteristic function of $F(x)$ is given by
\begin{equation*}
    \phi(t) := \sum_{n=0}^{\infty} \frac{M(2n)}{(2n)!} \, {(it)}^{2n}.
\end{equation*}
By Lemma \ref{main-lemma} and Lemma \ref{radius-cvg}, it suffices to show that $\phi(t)$ has positive radius of convergence. In fact, we prove that it is entire.\\ 

Let $\zeta(s) = \sum_{n=1}^{\infty} 1/n^s$ for $\Re(s)>1$ be the Riemann zeta function. In 1737, Euler showed that
\begin{equation*}
    \zeta(2m) = {(-1)}^{m+1} \, \frac{B_{2m} \, {(2 \pi)}^{2m}}{(2m)! \, 2}.
\end{equation*}

Since $\zeta(2m+2) < \zeta(2m)$, 
\begin{equation*}
     \bigg| \frac{B_{2m+2} }{(2m+2)!} \bigg| < \bigg| \frac{B_{2m+2} \, \pi^2}{(2m+2)!} \bigg| < \bigg| \frac{B_{2m} }{(2m)!} \bigg|.
\end{equation*}
Hence,
\begin{equation*}
    c(\underline{\lambda}) \leq \bigg| 2^{2nk} {((k-1)!)}^{2n} \bigg(\prod_{i=1}^m {(-1)}^{\lambda_{i}k+1} \frac{B_{2\lambda_{i}k}}{(2\lambda_{i}k)!} \bigg) \bigg| \leq 2^{2nk} {((k-1)!)}^{2n} {\bigg| \frac{B_{2k}}{(2k)!} \bigg|}^m.
\end{equation*}
and thus,
\begin{equation*}
    M(2n) = \frac{\pi^{2nk}}{{((k-1)!)}^{2n} \, 2^{2n}} \, \bigg(\sum_{\underline{\lambda} \in P_n} c(\underline{\lambda}) \bigg) \leq \frac{{(2 \pi)}^{2nk}}{2^{2n}} \, p(n) \, {\bigg| \frac{B_{2k}}{(2k)!} \bigg|}^n = \frac{p(n)}{2^n} \, {\bigg(\frac{\zeta(2k)}{2} \bigg)}^n,
\end{equation*}
where $p(n)$ denotes the number of partitions of $n$. In 1918, Hardy and Ramanujan \cite{hardy-ram} showed that
\begin{equation*}
    p(n) \sim \frac{1}{(4\sqrt{3}) \, n} \, \exp \bigg( \pi \sqrt{\frac{2n}{3}}\bigg) \text{    as } n \rightarrow \infty.
\end{equation*}

Now using Stirling's formula and the asymptotics for $p(n)$, we obtain that
\begin{equation*}
    {\bigg( \frac{M(2n)}{(2n)!} \bigg)}^{1/2n} \ll e^{-c \log n},
\end{equation*}
for a positive constant $c$. Therefore, applying the root test gives that the radius of convergence of $\phi(t)$ is infinite. This proves the theorem.

\subsection{Proof of Theorem \ref{density-1}}

Let $q \geq 3$ be odd and $E_q$ be the set of all Erd\H{o}s functions mod $q$. Let $r := (q-1)/2$. We define a relation on this set as follows. For $f,g \in E_q$,
\begin{equation*}
    f \sim g \iff f(a) = g(a), \hspace{1mm} \forall 1 \leq a \leq q, \hspace{1mm} (a,q) \neq 1.
\end{equation*}
One can easily check that $\sim$ is an equivalence relation. Before proceeding, we prove the following proposition.
\begin{proposition}\label{L(1,f)-prop}
There exists at most one Erd\H{o}s function $f$ in every equivalence class of $E_q$ under $\sim$, such that $L(1,f)=0$.
\end{proposition}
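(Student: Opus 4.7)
The plan is to reduce the statement to the non-vanishing theorem of Baker, Birch and Wirsing \cite{bbw}, which the paper has already invoked for the prime case of the conjecture. The key observation is that two equivalent Erd\H{o}s functions differ only on residue classes coprime to $q$, so if both kill $L(1,\cdot)$, their difference is a rational-valued function \emph{supported on coprime classes} whose $L$-value at $1$ vanishes — exactly the situation BBW rules out.

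More concretely, suppose $f, g \in E_q$ satisfy $f \sim g$ and $L(1,f) = L(1,g) = 0$, and set $h := f - g$. By the definition of $\sim$, $h(a) = 0$ for every $1 \leq a \leq q$ with $\gcd(a,q) \neq 1$; in particular $h(q) = 0$ since $f(q) = g(q) = 0$. Thus $h$ is a $q$-periodic, rational-valued function supported on the coprime residue classes, taking values in $\{-2, 0, 2\}$. Linearity of $L(1,\cdot)$ gives $L(1,h) = 0$, and the convergence condition $\sum_{a=1}^{q} h(a) = 0$ is inherited from $f$ and $g$.

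I would then apply BBW to $h$. Since $h$ is rational-valued, the field generated by its values is $\mathbb{Q}$, over which the $q$-th cyclotomic polynomial $\Phi_q$ is irreducible, so the cyclotomic irreducibility hypothesis of BBW is automatic. The support hypothesis holds by construction. BBW therefore forces $h \equiv 0$, i.e., $f = g$, which is exactly the uniqueness claim of the proposition.

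The main (and essentially only) non-trivial ingredient is BBW itself; everything else is bookkeeping. The subtlety worth flagging is that BBW does \emph{not} directly rule out $L(1,f) = 0$ for an Erd\H{o}s function $f$, because $f$ need not be supported on the coprime classes — that is exactly why Conjecture \ref{erdos-conj} remains open. What the above argument exploits is that the \emph{difference} $h = f - g$ of two equivalent candidates is supported on coprime classes by construction, so BBW applies to $h$ even though it does not apply to $f$ or $g$ individually. This per-class uniqueness is presumably the lever for Theorem \ref{density-1}: each equivalence class contributes at most one vanishing $f$, and a counting argument over the $\pm 1$ choices on the non-coprime classes should then show that vanishing $f$'s are a vanishing proportion of all $f \in E_q$ as $q$ grows.
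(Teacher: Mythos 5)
Your proof is correct, and it rests on the same pivotal observation as the paper's --- that the difference $h = f - g$ of two equivalent Erd\H{o}s functions is supported on the residue classes coprime to $q$ --- but it closes the argument with a different key lemma. The paper passes through the digamma formula \eqref{digamma}: writing $L(1,f)=L(1,g)=0$ as identities in the values $\Psi(a/q)+\gamma$ and subtracting, one obtains a $\mathbb{Q}$-linear relation among the numbers $\Psi(a/q)+\gamma$ with $(a,q)=1$, which must be trivial by the linear independence theorem of Murty and Saradha \cite[Theorem 4]{ram-saradha}. You instead apply the Baker--Birch--Wirsing theorem \cite{bbw} directly to $h$, checking (correctly) that $h$ is rational-valued, vanishes on all non-coprime classes including $h(q)=0$, satisfies $\sum_{a=1}^q h(a)=0$ so that $L(1,h)$ converges, and has $L(1,h)=0$ by linearity; BBW then forces $h\equiv 0$. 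The two black boxes are close cousins --- the Murty--Saradha independence result is itself proved by Baker's method and can be seen as a strengthening of BBW for rational coefficients without the sum-zero constraint --- so the arguments are morally equivalent. Your route is slightly more economical in that it bypasses the digamma expansion and reuses a theorem the paper already invokes for the prime case of Conjecture \ref{erdos-conj}; the paper's route makes the underlying mechanism (a vanishing $\mathbb{Q}$-linear form in the $\Psi(a/q)+\gamma$) explicit. Your closing observation about how the per-class uniqueness feeds a counting argument for Theorem \ref{density-1} also matches what the paper actually does.
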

\begin{proof}
Suppose $f, g \in E_q$ are such that $L(1,f)=L(1,g)=0$. Thus, by \eqref{digamma} and the convergence condition, we have
\begin{align*}
    \sum_{\substack{a=1 \\ (a,q)=1}}^q f(a) \bigg[ \Psi \bigg( \frac{a}{q} \bigg) + \gamma \bigg] & = - \, \sum_{\substack{a=1\\ (a,q) \neq 1}}^q f(a) \bigg[ \Psi \bigg( \frac{a}{q} \bigg) + \gamma \bigg] \\
    & = - \, \sum_{\substack{a=1 \\ (a,q) \neq 1}}^q g(a) \bigg[ \Psi \bigg( \frac{a}{q} \bigg) + \gamma \bigg] \\
    & = \sum_{\substack{a=1 \\ (a,q)=1}}^q g(a) \bigg[ \Psi \bigg( \frac{a}{q} \bigg) + \gamma \bigg].
\end{align*}
Therefore, we obtain 
\begin{equation*}
    \sum_{\substack{a=1 \\ (a,q)=1}}^q \big[f(a)-g(a)\big] \, \bigg[ \Psi \bigg( \frac{a}{q} \bigg) + \gamma \bigg],
\end{equation*}
which is a $\mathbb{Q}$-linear relation among the numbers
\begin{equation*}
    \Psi(a/q) + \gamma, \hspace{2mm} 1 \leq a \leq q, \hspace{1mm} (a,q)=1.
\end{equation*}
But these numbers are $\mathbb{Q}$-linearly independent as proven in \cite[Theorem 4]{ram-saradha}. Hence, $f=g$. 
\end{proof}

Therefore, it suffices to count the number of equivalence classes of $E_q$ under $\sim$. In order to count these, note that each equivalence class differs from the other based on the values of functions on $N_q := \{ \, a \, : \, 1 \leq a < q, \, (a,q) \neq 1\}$. At each point in this set, an Erd\H{o}s function can take either $1$ or $-1$, with the only restriction that 
\begin{equation*}
    \# \{ a \in N_q \, : \, f(a) = 1\} = \# \{ a \in N_q \, : \, f(a) = -1\} = \frac{q-1-\phi(q)}{2}.
\end{equation*}
Owing to this, two cases arise. For simplicity of notation, let $n_q:= (q-1-\phi(q))/2$ and recall that $r = (q-1)/2$. \\

\begin{enumerate}[(a)]
    \item $r \geq n_q$: In this case, the number of $a \in N_q$ where a function takes the value $1$ ranges from $0$ to $n_q$. Thus, the total number of equivalence classes is 
    \begin{equation*}
        \big| E_q / \sim \big| = \sum_{k=0}^{n_q} {n_q \choose k} = 2^{n_q}.
    \end{equation*}
    
    \item $r < n_q$: Let $j := n_q - r$. Then, the number of $a \in N_q$ where a function takes the value $1$ has to be at least $j$. Hence, the number of equivalence classes is
    \begin{equation*}
        \big| E_q / \sim \big| = \sum_{k=j}^{n_q} {n_q \choose k} < 2^{n_q}.
    \end{equation*}
\end{enumerate}

Therefore, in either case $| V_q | \leq 2^{(q-1-\phi(q))}$. Now, note that 
\begin{equation*}
    \big| E_q \big| = {2r \choose r}. 
\end{equation*}
Using the bounds by \cite{robbins}, one has
\begin{equation*}
    \sqrt{2 \pi} \, n^{(n + \frac{1}{2})} \, e^{-n}  < n! <  \sqrt{2 \pi} \, e \, n^{(n + \frac{1}{2})} \, e^{-n},
\end{equation*}
for all $n \in \mathbb{N}$, we get that
\begin{equation*}
    {2r \choose r} = \frac{(2r) \, !}{{(r!)}^2} \geq \frac{\sqrt{2 \pi} \, {(2r)}^{(2r + 1/2)} \, e^{-2r} }{{(\sqrt{2 \pi} \, e \, r^{(r + 1/2)} \, e^{-r})}^2} = \frac{\sqrt{2} \, 2^{2r} \, r^{2r} \, \sqrt{r}}{\sqrt{2 \pi} \, e^2 \, r^{2r} \, r } = \frac{2^{2r}}{e^2\sqrt{\pi} \, \sqrt{r}}.
\end{equation*}
Thus, 
\begin{equation*}\label{less than}
     \bigg( {\sum_{\substack{3 \leq q \leq x, \\ q \text{ odd }}} \# V_q} \bigg) \bigg/ \bigg( {\sum_{\substack{3 \leq q \leq x,\\ q \text{ odd }}} \# E_q } \bigg)  \ll  \frac{\sum_{r \leq x} 2^{2r-\phi(2r+1)}}{\sum_{r \leq x}( 2^{2r}/\sqrt{r})} \ll \frac{\sum_{r \leq x} 2^{2r-(2r/(\log \log r))}}{\sum_{r \leq x}( 2^{2r}/\sqrt{r})}
\end{equation*}
because by \cite[pg. 217]{landau},
\begin{equation*}
    \liminf_{n \rightarrow \infty} \frac{\phi(n) \log \log n}{n} = e^{-C}. 
\end{equation*}
Therefore, by Lemma \ref{partial-sum-lemma}, the right hand side tends to zero as $x \rightarrow \infty$. This proves Theorem \ref{density-1}.

\section{Conclusion}

Theorem \ref{distribution} leads to various natural questions regarding the distribution function $F(x)$. For example, the continuity of $F$, the rate of convergence in distribution as well as the tail of the distribution are but a few problems arising out of our study. Another direction is to understand the distribution of $L(k,f)$ without any restriction on the parity of Erd\H{o}s functions. In this course, one is led to study the analogue of higher dimensional Dedekind sums for the digamma function. Since these investigations are far afield from our current focus, we relegate them to future research.

\section*{Acknowledgements}
I extend my sincere gratitude to Prof. M. Ram Murty for drawing my attention to this question and for a careful reading of this article. I am also thankful to Abhishek Bharadwaj, Anup Dixit, Tapas Chatterjee and the referee for insightful comments on an earlier version of this paper.

\end{document}